\newtheorem{thm}{Theorem}[section]
\newtheorem{cor}[thm]{Corollary}
\newtheorem{lem}[thm]{Lemma}
\newcommand{\Z}{{\mathbb{Z}}}
\newcommand{\1}{\partial}
\newcommand{\3}{\varepsilon}
\newcommand{\4}{\widetilde}
\begin{document}
\title{Rotational symmetry and properties of the\\ 
ancient solutions of Ricci flow on surfaces} 
\author{Shu-Yu Hsu\\
%\thanks{ }\\
Department of Mathematics\\
National Chung Cheng University\\
168 University Road, Min-Hsiung\\
Chia-Yi 621, Taiwan, R.O.C.\\
e-mail: syhsu@math.ccu.edu.tw}
\date{March 11, 2010}
\smallbreak \maketitle
\begin{abstract}
We give a simple proof for the rotational symmetry of ancient 
solutions of Ricci flow on surfaces. As a consequence we obtain 
a simple proof of some results of P.~Daskalopoulos, R.~Hamilton and
N.~Sesum on the a priori estimates for the ancient 
solutions of Ricci flow on surfaces. We also give a simple proof
for the solution to be a Rosenau solution under some mild conditions 
on the solutions of Ricci flow on surfaces.
\end{abstract}

\vskip 0.2truein

Key words: rotational symmetry, ancient solutions, Ricci flow, surfaces, 
a priori estimates, Rosenau solution

AMS Mathematics Subject Classification: Primary 58J35, 53C43 Secondary 35K55
\vskip 0.2truein
\setcounter{equation}{0}
\setcounter{section}{-1}

%\section{}
\setcounter{equation}{0}
\setcounter{thm}{0}

Recently there is a lot of study on Ricci flow on manifolds by R.~Hamilton
[7--11], S.Y.~Hsu \cite{Hs1}, \cite{Hs2}, G.~Perelman \cite{P1}, 
\cite{P2}, W.X.~Shi \cite{S1}, \cite{S2}, L.F.~Wu \cite{W1}, \cite{W2}, 
and others because it is an important tool in the study of geometry. 
Interested readers can read the book \cite{CK} by B.~Chow and 
D.~Knopf and the book \cite{MT} by J.~Morgan and G.~Tang for various topics 
on Ricci flow. One can also read the papers \cite{P1}, \cite{P2} of 
G.~Perelman for the most recent results on Ricci flow.

It is known \cite{P1} that the behavior of solutions of Ricci flow near 
a singularity can be described by the ancient solutions of Ricci flow. Hence
it is important to understand the ancient solutions of Ricci flow. 
Ancient solutions of Ricci flow on spheres and generalized Hopf fibrations was
studied by I.~Bakas, S.L.~Kong and L.~Ni \cite{BKN}. Ancient solutions of 
Ricci flow on noncompact surfaces was studied by S.C.~Chu \cite{Ch} and 
ancient solutions of Ricci flow on compact surfaces was studied by
P.~Daskalopoulos, R.S.~Hamilton, and N.~Sesum \cite{DHS}.

Let $g=(g_{ij})$ be an ancient solution of the Ricci flow,
\begin{equation*}
\frac{\1}{\1 t}g_{ij}=-2R_{ij}\quad\forall t<0,
\end{equation*}
on a compact surface which becomes singular at time $t=0$. It is observed by 
P.~Daskalopoulos, R.S.~Hamilton, and N.~Sesum in \cite{DHS} that by the 
results of \cite{C} and \cite{H3} the ancient solution of the Ricci flow 
can be parametrized by
\begin{equation*}
g(\cdot,t)=u(\cdot,t)ds_p^2
\end{equation*}
where 
$$
ds_p^2=d\psi^2+\cos^2\psi\,d\theta^2  
$$
is the spherical metric on the round sphere $S^2$ with coordinates 
$(\psi,\theta)$. Note that the Christoffel symbols for the spherical metric
are
$$
\Gamma_{12}^2=\Gamma_{21}^2=-\tan\psi,\quad\Gamma_{22}^1
=\frac{\sin2\psi}{2},\quad\Gamma_{22}^2=\Gamma_{11}^2=\Gamma_{11}^1
=\Gamma_{12}^1=0
$$
and
$$
\Delta_{S^2}f=f_{\psi\psi}-(\tan\psi )f_{\psi}
+(\sec^2\psi)f_{\theta\theta}
$$
for any function $f$ on the sphere and $u$ satisfies (\cite{DHS}),
\begin{equation}
u_t=\Delta_{S^2}\log u -2\quad\mbox{ in }S^2\times (-\infty,0).
\end{equation}
Let $v=u^{-1}$. Then $v$ satisfies
\begin{equation}
v_t=v\Delta_{S^2}v-|\nabla_{S^2}v|^2+2v^2\quad\mbox{ in }
S^2\times (-\infty,0).
\end{equation}
It was proved in \cite{DHS} that under a conformal 
change of $S^2$
\begin{equation*}
\lim_{t\to -\infty}v(\psi,\theta,t)=C_0\cos^2\psi
\quad\forall -\frac{\pi}{2}\le\psi\le\frac{\pi}{2}, 0\le\theta\le 2\pi
\end{equation*}
for some constant $C_0\ge 0$. Moreover $g$ is the contracting sphere 
with
$$
v(\psi,\theta,t)=\frac{1}{2(-t)}\quad\forall t<0,-\frac{\pi}{2}\le\psi
\le\frac{\pi}{2}, 0\le\theta\le 2\pi
$$ 
when $C_0=0$. When $C_0>0$, P.~Daskalopoulos, R.S.~Hamilton, and N.~Sesum 
(\cite{DHS}) proved that $g$ is the Rosenau solution \cite{R} with 
\begin{equation}
v(\psi,\theta,t)=-\mu\coth (2\mu t)+\mu\tanh (2\mu t)\sin^2\psi
\quad\forall t<0,-\frac{\pi}{2}\le\psi
\le\frac{\pi}{2}, 0\le\theta\le 2\pi
\end{equation}
for some $\mu>0$. An essential step in their proof is the proof of the
rotational symmetry of the ancient solutions of Ricci flow on surfaces. 
However their proof of the rotational symmetry is very hard and require 
the use of the difficult Lemma 2.10 of \cite{DHS}. In this paper under
a mild condition on $g$ we will give a simple proof of the rotational 
symmetry which avoids Lemma 2.10 of \cite{DHS}. 

As a consequence we also obtain simple proofs of some 
results of \cite{DHS} on the a priori estimates of the ancient solutions 
of Ricci flow on surfaces. Since the proof in \cite{DHS} that $g$
is the Rosenau solution when $C_0>0$ is hard, in this paper we will 
give a simple proof of this result under some mild conditions on $g$. 

Let $R(\cdot,t)$ be the scalar curvature of $g(\cdot,t)$. For any $z\in S^2$
and $\delta>0$, let $B_{\delta}(z)$ be the geodesic ball with center $z$
and radius $\delta$ on $S^2$ with respect to the round metric $ds_p^2$.
We first recall some results of \cite{DHS}.

\begin{lem}(Lemma 2.1 of \cite{DHS})  
For any $t_0<0$ there exists a constant $C_1>0$ such that
\begin{equation}
\sup_{S^2}\biggl(|\Delta_{S^2}v|+\frac{|\nabla_{S^2}v|^2}{v}\biggr)
\le C_1\quad\forall t\le t_0.
\end{equation}
\end{lem}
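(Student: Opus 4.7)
Plan. My strategy is to reduce the lemma to a single gradient estimate via the fundamental identity
$$R=\frac{v_t}{v}=\Delta_{S^2}v-\frac{|\nabla_{S^2}v|^2}{v}+2v,\qquad(\star)$$
which follows from (1), (2), and the conformal change formula for the scalar curvature on a surface.

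The first step is to bound $v$ uniformly from above for $t\le t_0$. By Hamilton's result $R>0$ on any ancient Ricci flow on $S^2$, so $(\log v)_t=R>0$ and $v(\cdot,t)$ is monotone increasing in $t$; hence $v(\cdot,t)\le\max_{S^2}v(\cdot,t_0)$ for all $t\le t_0$. Next, $R$ itself is bounded above: under Ricci flow on a surface $R_t=\Delta_g R+R^2$, so at a spatial maximum $(R_{\max})_t\le R_{\max}^2$, and an ODE comparison backward from $t_0$ gives $R_{\max}(t)\le(R_{\max}(t_0)^{-1}+t_0-t)^{-1}\le C$. With $v$ and $R$ bounded, $(\star)$ reduces the lemma to obtaining a uniform bound on $P:=|\nabla_{S^2}v|^2/v$, since then $|\Delta_{S^2}v|=|R-2v+P|\le C$.

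The main step is the bound on $P$. Recognizing that $P=|\nabla_g\log u|^2$, the squared gradient of $\log u$ in the evolving metric $g=u\,ds_p^2$, I would compute directly on $(S^2,g(t))$. Using the Bochner formula on the evolving surface (whose Gauss curvature is $R/2$), the identity $\Delta_g\log u=2v-R$, and the fact that $\partial_t\log u=-R$, a calculation yields the clean evolution
$$P_t=\Delta_g P-2|\nabla_g^2\log u|^2+4vP,$$
in which the sign-indefinite cross terms $\langle\nabla_g\log u,\nabla_g R\rangle$ happen to cancel. Combining the Kato-type inequality $|\nabla_g^2\log u|^2\ge\tfrac12(\Delta_g\log u)^2=\tfrac12(2v-R)^2$ with a suitable auxiliary quantity (such as $\tilde P=P+\alpha v-\beta R$ with constants $\alpha,\beta>0$ chosen so as to defeat the $+4vP$ term via the $-2|\nabla_g^2\log u|^2$ term) should close the maximum-principle argument on the compact surface $S^2$ and give a uniform bound on $P$ for $t\le t_0$.

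The main obstacle is that the $+4vP$ term in the evolution of $P$ is destabilizing for a backward-in-time maximum-principle argument, so one must absorb it using the Hessian term and/or couple the computation with the evolutions of $v^2$ and $R$ through a judicious choice of auxiliary function. An alternative route is to invoke Shi-type derivative estimates or Hamilton's differential Harnack inequality for the Ricci flow on surfaces, which provide sharp backward-in-time geometric control available for ancient solutions; either path requires careful bookkeeping of the interaction between the fast-diffusion structure of (2) and the positive Gaussian curvature of $S^2$.
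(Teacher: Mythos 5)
The paper offers no proof of this lemma---it is quoted verbatim as Lemma 2.1 of \cite{DHS}---so your attempt can only be judged on its own terms. Your overall architecture is reasonable and consistent with how the paper later uses these quantities: reduce everything through the identity $R=\Delta_{S^2}v-|\nabla_{S^2}v|^2/v+2v$ (the paper's (26)), bound $v$ above by monotonicity from $R>0$ (the paper's (13)), bound $R$ above for $t\le t_0$, and then it remains only to bound $P=|\nabla_{S^2}v|^2/v=|\nabla_g\log u|^2$. Your evolution equation $P_t=\Delta_gP-2|\nabla_g^2\log u|^2+4vP$ is correct (the cross terms $\langle\nabla_g\log u,\nabla_gR\rangle$ do cancel). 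However, there are two genuine gaps.

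First, the bound on $R$. The differential inequality $\frac{d}{dt}R_{\max}\le R_{\max}^2$ controls $R_{\max}$ \emph{forward} in time from $t_0$; backward in time the comparison runs the other way and yields only the lower bound $R_{\max}(t)\ge(R_{\max}(t_0)^{-1}+t_0-t)^{-1}$. Indeed your claimed upper bound would force $R_{\max}(t)\to0$ as $t\to-\infty$, which is false for the Rosenau solution (3): there $R=v_t/v\to4\mu>0$ at the poles as $t\to-\infty$. The correct tool is the one you list only as an ``alternative route'': Hamilton's trace Harnack inequality gives $R_t\ge0$ for ancient solutions, hence $0<R\le\max_{S^2}R(\cdot,t_0)$ for $t\le t_0$, which is exactly display (29) of the paper. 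Second, and more seriously, the maximum-principle argument for $P$ does not close as sketched. Any inequality of the form $\frac{d}{dt}P_{\max}\le CP_{\max}+C'$ (or $\le C'$) again bounds $P_{\max}$ only forward in time; to control an ancient solution for all $t\le t_0$ you need either a pointwise monotone quantity or a term $-cP_{\max}^2$ so that the backward finite-time blow-up argument applies. The Hessian term supplies only $-2|\nabla_g^2\log u|^2\le-(\Delta_g\log u)^2=-(2v-R)^2$, which contains no $P$ at all, and the auxiliary function $\4{P}=P+\alpha v-\beta R$ does not help: since $v_t=\Delta_gv-vP+2v^2$, choosing $\alpha=4$ cancels the $+4vP$ term but leaves $\4{P}_t\le\Delta_g\4{P}+8v^2-\beta R^2$, a bounded forcing that still yields no backward-in-time upper bound. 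A correct proof must manufacture either monotonicity or a genuinely quadratic negative term in the quantity being estimated; neither is exhibited in your sketch.
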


\begin{thm}(cf. Proposition 3.4 and Theorem 4.1 of \cite{DHS})
$v(\cdot,t)$ decreases and converges in $C^{1,\alpha}$ to some function 
$\4{v}\in C^{1,\alpha}(S^2)$ for any $0<\alpha<1$ as $t\to -\infty$.
Moreover under a conformal change of $S^2$, 
\begin{equation}
\4{v}(\psi,\theta)=C_0\cos^2\psi
\end{equation}
for some constant $C_0\ge 0$ and the convergence is uniform in 
$C^k(K)$ for any $k\in\Z^+$ and any compact set $K\subset 
S^2\setminus\{S,N\}$ where $S$ and $N$ are the south pole and the 
north pole of $S^2$.
\end{thm}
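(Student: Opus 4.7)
The plan is to carry out four stages: establish pointwise monotonicity of $v(\cdot,t)$ in $t$, $C^{1,\alpha}$ compactness via Lemma~1, identification of a limit equation, and classification of its solutions.

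For monotonicity, I set $u = 1/v$ and rewrite (1) as $(\log u)_t = u^{-1}(\Delta_{S^2}\log u - 2) = -R_g$, where $R_g$ is the scalar curvature of $g = u\,ds_p^2$; equivalently $v_t = vR_g$. Hamilton's result that any ancient solution of Ricci flow on a compact surface has $R_g \ge 0$ then forces $v_t \ge 0$, so $v(\cdot, t)$ decreases as $t\to -\infty$. Combined with the resulting upper bound $v \le v(\cdot,t_0)$ on $S^2\times(-\infty,t_0]$ and the estimate $|\Delta_{S^2}v| \le C_1$ from Lemma~1, standard $L^p$ elliptic regularity on $S^2$ together with Sobolev embedding yield a uniform $C^{1,\alpha}(S^2)$ bound for every $0 < \alpha < 1$. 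Monotonicity produces the pointwise limit $\4{v}(x):=\lim_{t\to-\infty}v(x,t)$, and Arzel\`a--Ascoli promotes this to $C^{1,\alpha}(S^2)$ convergence of the full family.

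To identify $\4{v}$, monotonicity yields $\int_{-\infty}^{t_0}\|v_t(\cdot,s)\|_{L^1(S^2)}\,ds = \int_{S^2}(v(\cdot,t_0) - \4{v})\,dA < \infty$, so there is a sequence $t_k\to-\infty$ with $v_t(\cdot,t_k)\to 0$ in $L^1(S^2)$. Passing (2) to the limit distributionally yields $\4{v}\,\Delta_{S^2}\4{v} - |\nabla_{S^2}\4{v}|^2 + 2\4{v}^2 = 0$, equivalently $\Delta_{S^2}\log\4{v} = -2$ on $\{\4{v}>0\}$. If $\4{v}$ were positive everywhere, integration over $S^2$ would give the contradiction $0=-8\pi$, so $Z:=\{\4{v}=0\}$ is nonempty. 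On $S^2\setminus Z$ the conformal metric $\4{v}^{-1}\,ds_p^2$ has Gaussian curvature $\4{v}\bigl(1+\tfrac12\Delta_{S^2}\log\4{v}\bigr) = 0$, and by Gauss--Bonnet together with the classification of complete flat conformal metrics on a punctured sphere the zero set consists of exactly two points, say $p_1$ and $p_2$. A M\"obius transformation sending $\{p_1,p_2\}$ to $\{N,S\}$ and a removable-singularity argument applied to the bounded harmonic function $\log\4{v}-\log\cos^2\psi$ on $S^2\setminus\{N,S\}$ then force $\4{v}=C_0\cos^2\psi$ for some $C_0\ge 0$.

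Finally, on any compact $K\subset S^2\setminus\{N,S\}$, $\4{v}\ge c > 0$; by the $C^{1,\alpha}$ convergence, $v(\cdot,t)\ge c/2$ on $K$ for $t$ sufficiently negative. Equation (2) is then uniformly parabolic on $K$ with smooth coefficients, and parabolic Schauder bootstrapping starting from Lemma~1 yields uniform $C^k$ estimates on compact subsets of $K$, upgrading the convergence to $C^k$. I expect the main obstacle to be the classification: showing rigorously that $|Z|=2$ and extracting the precise form $\4{v}=C_0\cos^2\psi$ essentially re-derives the rotational symmetry the paper is built to establish, and demands careful local analysis of $\4{v}$ near its zero set.
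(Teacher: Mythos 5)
The paper does not actually prove this statement: Theorem~2 is quoted from \cite{DHS} (Proposition 3.4 and Theorem 4.1 there) and used as a black box, so there is no internal argument to compare yours against. Judged on its own terms, your first three stages are sound: $v_t=Rv\ge 0$ with $R\ge 0$ for ancient surface solutions gives the monotonicity, Lemma~1 plus $L^p$ elliptic theory gives the uniform $C^{1,\alpha}$ bound and hence convergence to $\4{v}$, and the $L^1$-in-time integrability of $v_t$ lets you pass to the stationary equation $\Delta_{S^2}\log\4{v}=-2$ on $\{\4{v}>0\}$, with Gauss--Bonnet forcing the zero set $Z$ to be nonempty (unless one separates out $\4{v}\equiv 0$, which you should, since it is the $C_0=0$ case and also the case where your final uniform-parabolicity argument for $C^k$ convergence breaks down).

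The genuine gap is in the classification stage, and it is larger than your closing caveat suggests, because there is a concrete competitor that survives every estimate you have put on the table. Take $\4{v}_\ast=4C_0(1+|z|^2)^{-2}$ in a stereographic coordinate, i.e.\ $\4{v}_\ast=C_0(1+\cos\Psi)^2$ in a polar angle $\Psi$: this is a smooth nonnegative function on $S^2$ vanishing at exactly one point, it satisfies $\Delta_{S^2}\log\4{v}_\ast=-2$ away from that point, it is bounded, and both $|\Delta_{S^2}\4{v}_\ast|$ and $|\nabla_{S^2}\4{v}_\ast|^2/\4{v}_\ast$ are bounded, so it passes Lemma~1 and the monotone upper bound. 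The associated flat metric $\4{v}_\ast^{-1}ds_p^2$ is the Euclidean plane, which is complete and is not conformally equivalent (by any M\"obius map) to the cylinder $(C_0\cos^2\psi)^{-1}ds_p^2$. So ``complete flat conformal metric on a punctured sphere'' does not imply $|Z|=2$, and you have identified no property of the actual limit that excludes $|Z|=1$. Moreover the two steps you lean on both presuppose what is to be proved: completeness of $\4{v}^{-1}ds_p^2$ near $Z$ requires a vanishing rate no faster than quadratic, and boundedness of the harmonic function $\log\4{v}-\log\cos^2\psi$ (needed for the removable-singularity step) requires the two-sided bound $c\cos^2\psi\le\4{v}\le C\cos^2\psi$; neither is established, and you also have no argument that $Z$ is finite rather than, say, a closed curve. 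Ruling out the plane and pinning the limit to the round cylinder is precisely the content of Theorem 4.1 of \cite{DHS} and needs an additional mechanism (e.g.\ the behaviour of the width/area or curvature of $g(t)$ as $t\to-\infty$) that your outline does not supply.
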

We will now assume that the coordinates on $S^2$ are chosen such that 
(5) holds for the rest of the paper. We will also assume that $C_0>0$ 
in (5) and there exist constants $0<a<3$, $t_0<0$, and $C>0$ such that 
\begin{equation}
v_{\theta}^2\le C v^{1+a}\cos^2\psi\quad\mbox{ on }S^2\times 
(-\infty,t_0]
\end{equation}
for the rest of the paper. Note that the Rosenau solution (3) satisfies
(6).

\begin{thm}(cf. Theorem 6.1 of \cite{DHS})
For any $|\psi|\le\pi/2$ and $t<0$, $u(\psi,\theta,t)$ is independent
of $\theta\in [0,2\pi]$. 
\end{thm}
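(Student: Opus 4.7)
The plan is to derive a linear parabolic equation for $w := v_\theta$, show by a weighted energy estimate tailored to hypothesis (6) that $w \equiv 0$ on $S^2 \times (-\infty, t_0]$, and then extend to all $t < 0$ by forward uniqueness for (2).

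Since $\partial_\theta$ is a Killing field of the round metric $ds_p^2$, $\Delta_{S^2}$ commutes with $\partial_\theta$; differentiating (2) in $\theta$ gives the linear degenerate parabolic equation
\begin{equation*}
w_t = v\,\Delta_{S^2} w - 2\nabla_{S^2} v \cdot \nabla_{S^2} w + (4v + \Delta_{S^2} v)\,w
\end{equation*}
on $S^2 \times (-\infty, 0)$, whose coefficients are uniformly bounded on $S^2 \times (-\infty, t_0]$ by Lemma 1.

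Consider the weighted energy $E(t) := \int_{S^2} \frac{v_\theta^2}{v^{1+a}}\, dA$, where the weight is tuned to hypothesis (6): the integrand is dominated pointwise by $C\cos^2\psi$, uniformly in $t \le t_0$. Theorem 2 gives $v_\theta \to 0$ uniformly on any compact $K \subset S^2 \setminus \{S, N\}$, while near the poles the dominating function $C\cos^2\psi$ is arbitrarily small; hence dominated convergence yields $E(s) \to 0$ as $s \to -\infty$. A direct computation with the equation for $w$, integration by parts on the closed surface $S^2$, and the identity $v_t = v^2\Delta_{S^2}\log v + 2v^2$ (used to rewrite $\partial_t v^{-(1+a)}$) produces
\begin{equation*}
E'(t) = -2\int_{S^2} \frac{|\nabla_{S^2} w|^2}{v^a}\, dA + \int_{S^2} \frac{w^2}{v^{1+a}}\,\Bigl[(3-2a)\Delta_{S^2} v + (6-2a)v + (a^2-1)\tfrac{|\nabla_{S^2} v|^2}{v}\Bigr]\, dA,
\end{equation*}
whose bracketed coefficient is uniformly bounded by Lemma 1. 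In particular $E'(t) \le K\,E(t)$ for some $K > 0$ on $(-\infty, t_0]$.

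The main obstacle is closing the Gronwall argument: $E' \le KE$ together with $E(s) \to 0$ does not by itself force $E \equiv 0$, since one would a priori need $E(s) = o(e^{Ks})$. I would overcome this by exploiting the good term $-2\int |\nabla_{S^2} w|^2/v^a\, dA$ together with the Poincar\'e inequality on $S^2$ (valid for $w$ since $\int_{S^2} w\, dA = 0$, because $w = v_\theta$ has zero $\theta$-mean) and the lower bound $v \ge C_0 \cos^2\psi$ coming from the $t$-monotonicity of $v$ in Theorem 2, to upgrade $E'(t) \le K E(t)$ to a strict decay $E'(t) \le -\kappa\,E(t)$ for some $\kappa > 0$. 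This forces $E \equiv 0$ on $(-\infty, t_0]$, hence $w \equiv 0$ there. Uniqueness for the quasilinear forward Cauchy problem (2) on $[t_0, 0)$ with $\theta$-independent data at $t = t_0$ then yields $u_\theta \equiv 0$ for all $t < 0$.
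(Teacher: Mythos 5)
Your overall strategy (use hypothesis (6) to build a quantity that tends to $0$ as $t\to-\infty$, and a monotonicity/contraction statement to propagate that vanishing forward) is in the same spirit as the paper, but the specific mechanism you choose has a gap at exactly the step you flag as the ``main obstacle,'' and the proposed fix does not close it. You need $E'(t)\le 0$ (at the very least) so that $E(t_1)\le E(s)\to 0$; what you actually have is $E'\le KE$ with $K$ coming from the uniform bounds of Lemma~1 on $\Delta_{S^2}v$, $v$, and $|\nabla_{S^2}v|^2/v$, and $K$ carries no smallness. To absorb the zeroth-order term $\int w^2 v^{-(1+a)}[\cdots]\,dA$ into the dissipation $-2\int |\nabla_{S^2}w|^2 v^{-a}\,dA$ you would need a \emph{weighted} Poincar\'e--Hardy inequality of the form $\int w^2 v^{-(1+a)}\,dA\le C\int|\nabla_{S^2}w|^2v^{-a}\,dA$ with $C\le 2/K$. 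The standard Poincar\'e inequality on $S^2$ for mean-zero $w$ is not the right tool: since $v\sim C_0\cos^2\psi$ near the poles, the weight $v^{-(1+a)}\sim(\cos\psi)^{-2-2a}$ blows up there, so $\int w^2 v^{-(1+a)}\,dA$ is not controlled by $\int w^2\,dA$. Even if one proves the correct Hardy-type inequality near the poles (using that $w=v_\theta$ vanishes there), its constant is fixed by the exponent $a$ and the geometry, and there is no reason it beats the uninspected constant $K$; so the claimed upgrade to $E'\le-\kappa E$ is unsubstantiated. As written, the argument does not rule out $E\not\equiv 0$.

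For contrast, the paper sidesteps this constant-chasing entirely by an $L^1$-contraction (Dahlberg--Kenig/Hui duality) argument: it compares $u$ with its rotation $u_1(q,t)=u(q+\theta_0 e_\theta,t)$, solves a backward dual problem $\eta_t+A\Delta_{S^2}\eta=0$ with $A=(\log u-\log u_1)/(u-u_1)$, and obtains the exact monotonicity $\int_{S^2}|u-u_1|(\cdot,t_1)\,dV\le\int_{S^2}|u-u_1|(\cdot,t_2)\,dV$ with constant $1$ --- there is no zeroth-order term to absorb because the $-2$ in $u_t=\Delta_{S^2}\log u-2$ cancels in the difference of two solutions. Hypothesis (6) then enters only to produce the integrable dominating function $|u_\theta|\le C(\cos\psi)^{a-2}$ (integrable against $\cos\psi\,d\psi$ precisely because $a>0$), so that dominated convergence plus the $C^{1}$ convergence of Theorem~2 kills the right-hand side as $t_2\to-\infty$. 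If you want to salvage your energy approach, the honest route is to prove the weighted Hardy inequality with an explicit constant and verify it dominates $K$, or to find a weight for which the zeroth-order coefficient is nonpositive; otherwise I would recommend switching to the $L^1$-contraction framework, which delivers the needed monotonicity for free.
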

\begin{proof}
A proof of this result without the condition (6) using Lemma 2.10 of 
\cite{DHS} is given in \cite{DHS}.
However the proof in \cite{DHS} is hard. Under the condition (6) we will 
give a different simple proof here using the technique of \cite{DK} and 
\cite{Hu}. Let $\theta_0\in (0,2\pi)$.
For any $q=(\theta,\psi)$, let $W(q)=(\theta+\theta_0,\psi)$ be 
the point on $S^2$ obtained by rotating $q$ an angle $\theta_0$ about the 
$z$-axis. For any $q\in S^2$, $t<0$, let $u_1(q,t)=u(W(q),t)$ and
$$
A(q,t)=\left\{\begin{aligned}
&\frac{\log u(q,t)-\log u_1(q,t)}{u(q,t)-u_1(q,t)}\quad\mbox{ if }
u(q,t)\ne u_1(q,t)\\
&\frac{1}{u(q,t)}\qquad\qquad\qquad\qquad\mbox{ if }
u(q,t)=u_1(q,t).
\end{aligned}\right.
$$
Let $t_2<t_1<t_0<0$. Then
$$
C_1\le A(q,t)\le C_2\quad\mbox{ on }S^2\times [t_2,t_1]
$$
for some constants $C_2>C_1>0$. For any $h\in C^{\infty}(S^2)$, 
$0\le h\le 1$, let $\eta$ be the solution of 
\begin{equation}
\left\{\begin{aligned}
\eta_t+A\Delta_{S^2}\eta=&0\qquad\mbox{ on }S^2\times [t_2,t_1)\\
\eta (q,t_1)=&h(q)\quad\mbox{ on }S^2.
\end{aligned}\right.
\end{equation}
By the maximum principle, $0\le\eta\le 1$ on $S^2\times [t_2,t_1)$. 
Since $u_1$ also satisfies (1), by (7),
\begin{align*}
&\int_{S^2}(u-u_1)(q,t_1)h(q)\,dV-\int_{S^2}(u-u_1)(q,t_2)
\eta (q,t_2)\,dV\nonumber\\
=&\int_{t_2}^{t_1}\frac{\1}{\1 t}\left(\int_{S^2}(u-u_1)\eta\,dV
\right)\,dt\nonumber\\
=&\int_{t_2}^{t_1}\int_{S^2}[(u-u_1)\eta_t+(u-u_1)_t\eta]\,dV\,dt
\nonumber\\
=&\int_{t_2}^{t_1}\int_{S^2}[(u-u_1)\eta_t
+\eta\Delta_{S^2}(\log u-\log u_1)]\,dV\,dt\nonumber\\
=&\int_{t_2}^{t_1}\int_{S^2}(u-u_1)[\eta_t+A\Delta_{S^2}\eta]
\,dV\,dt\nonumber\\
=&0.
\end{align*} 
Hence
\begin{equation}
\int_{S^2}(u-u_1)(q,t_1)h(q)\,dV=\int_{S^2}(u-u_1)(q,t_2)\eta (q,t_2)\,dV
\le\int_{S^2}(u-u_1)_+(q,t_2)\,dV
\end{equation}
We now choose a sequence of smooth functions $\{h_i\}_{i=1}^{\infty}$ on
$S^2$, $0\le h_i\le 1$ for all $i\in\Z^+$, such that $h_i$ converges
a.e. to the characteristic function of the set 
$\{q\in S^2:u(q,t_1)>u_1(q,t_1)\}$ as $i\to\infty$. Putting $h=h_i$ in
(8) and letting $i\to\infty$,
\begin{equation}
\int_{S^2}(u-u_1)_+(q,t_1)\,dV\le\int_{S^2}(u-u_1)_+(q,t_2)\,dV.
\end{equation}
Interchanging the role of $u$ and $u_1$ and repeating the above argument,
\begin{equation}
\int_{S^2}(u_1-u)_+(q,t_1)\,dV\le\int_{S^2}(u_1-u)_+(q,t_2)\,dV.
\end{equation}
By (9) and (10),
\begin{align}
\int_{S^2}|u-u_1|(q,t_1)\,dV\le&\int_{S^2}|u-u_1|(q,t_2)\,dV\nonumber\\
\le&\int_{-\frac{\pi}{2}}^{\frac{\pi}{2}}
\int_0^{2\pi}\left(\int_{\theta}^{\theta+\theta_0}|u_{\theta}|
(\rho,\psi,t_2)\,d\rho\right)\cos\psi\,d\theta\,d\psi.
\end{align}
By (6),
\begin{equation}
v^{3-a}u_{\theta}^2\le C\cos^2\psi\quad\mbox{ on }S^2
\quad\forall t\le t_0.
\end{equation}
By Theorem 2,
\begin{equation}
C_0\cos^2\psi\le v\le\max_{S^2}v(\cdot,t_0)\quad\mbox{ on }S^2
\quad\forall t\le t_0.
\end{equation}
By (12) and (13),
\begin{equation}
|u_{\theta}|\le C'(\cos\psi)^{a-2}\quad\mbox{ on }S^2\quad\forall t\le t_0.
\end{equation}
By Theorem 2, $u(\cdot,t)$ converges uniformly to $1/(C_0\cos^2\psi)$ 
in $C^{1,\alpha}$ for any $0<\alpha<1$ on any compact set $K\subset 
S^2\setminus\{S,N\}$ as $t\to -\infty$. Hence $u_{\theta}(\cdot,t)$ 
converges uniformly to $0$ on any compact set $K\subset S^2
\setminus\{S,N\}$ 
as $t\to -\infty$. Letting $t_2\to -\infty$ in (11), by (14) 
and the Lebesgue dominated convergence theorem,
\begin{align*}
&\int_{S^2}|u-u_1|(q,t_1)\,dV\le 0\quad\forall t_1<0\nonumber\\
\Rightarrow\quad&u(\theta,\psi,t)\equiv u(\theta+\theta_0,\psi,t)
\quad\forall\theta,\theta_0\in [0,2\pi],|\psi|\le\pi/2,t<0.
\end{align*}
Thus $u(\theta,\psi,t)$ is independent of $\theta$ and the 
proposition follows.
\end{proof}

We can now write
$$
u(\psi,t)=u(\theta,\psi,t)\quad\mbox{ and }\quad
v(\psi,t)=v(\theta,\psi,t)\quad\forall\theta\in [0,2\pi],|\psi|
\le\pi/2,t<0
$$
and let
\begin{equation*}
f=\Delta_{S^2}v.
\end{equation*}
Then $f\in C^{\infty}(S^2\times (-\infty,0))$ and by Lemma 1 there 
exists a constant $C_1>0$ such that
\begin{equation}
\sup_{S^2}|f(\cdot,t)|\le C_1\quad\forall t\le t_0.
\end{equation} 

\begin{cor}(Propositon 2.5 and Lemma 2.7 of \cite{DHS})
For any $t_0<0$ there exists a constant $C_2>0$ such that
\begin{equation}
|v_{\psi\psi}|+|(\sec\psi)v_{\psi}|\le C_2\quad\mbox{ on }S^2
\quad\forall t\le t_0.
\end{equation}
\end{cor}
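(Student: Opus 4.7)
The plan is to use the rotational symmetry from Theorem 3 to reduce the equation $\Delta_{S^2}v=f$ to a second-order linear ODE in $\psi$ at each fixed time $t\le t_0$, and then to rewrite this ODE in divergence form so the behaviour at the two poles becomes transparent. Since $v_\theta\equiv 0$ by Theorem 3, the identity $f=\Delta_{S^2}v$ reduces to $v_{\psi\psi}-(\tan\psi)v_\psi=f$. Multiplying through by $\cos\psi$ and recognising the left-hand side as a total derivative yields
$$\frac{d}{d\psi}\bigl[(\cos\psi)v_\psi\bigr]=(\cos\psi)v_{\psi\psi}-(\sin\psi)v_\psi=f\cos\psi,$$
which is the key algebraic observation on which everything else hinges.

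The next step is to integrate this identity from each pole inwards. Because $v(\cdot,t)\in C^\infty(S^2)$, the derivative $v_\psi$ stays bounded near $\psi=\pm\pi/2$, so $(\cos\psi)v_\psi\to 0$ as $\psi\to\pm\pi/2$. Integrating once from $-\pi/2$ to $\psi$ and once from $\psi$ to $\pi/2$ produces the two representations
$$(\cos\psi)v_\psi(\psi,t)=\int_{-\pi/2}^{\psi}f(\phi,t)\cos\phi\,d\phi=-\int_{\psi}^{\pi/2}f(\phi,t)\cos\phi\,d\phi.$$
Invoking (15), namely $|f|\le C_1$, and taking the smaller of the two upper bounds gives
$$|(\cos\psi)v_\psi(\psi,t)|\le C_1\min(1+\sin\psi,\,1-\sin\psi)=C_1(1-|\sin\psi|)\le C_1\cos^2\psi,$$
where the last step uses the elementary factorisation $1-|\sin\psi|=\cos^2\psi/(1+|\sin\psi|)$. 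Dividing through by $\cos\psi$ gives $|(\sec\psi)v_\psi|\le C_1$ uniformly on $S^2\times(-\infty,t_0]$.

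The bound on $v_{\psi\psi}$ then comes for free: from $v_{\psi\psi}=f+(\tan\psi)v_\psi=f+(\sin\psi)\cdot(\sec\psi)v_\psi$ together with (15) and the bound just derived we obtain $|v_{\psi\psi}|\le 2C_1$, and the corollary follows with $C_2=3C_1$. No step presents a real obstacle once Theorem 3 is granted; the only subtlety is justifying $(\cos\psi)v_\psi\to 0$ at the poles, which is immediate from the $C^1$-regularity of the smooth function $v(\cdot,t)$ on $S^2$. The conceptual punchline is that writing $\Delta_{S^2}$ in divergence form makes the apparent $\sec\psi$ singularity at each pole cancel against the double zero of $\cos^2\psi$ produced by the integral of a bounded function against $\cos\phi$.
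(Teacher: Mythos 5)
Your proposal is correct and follows essentially the same route as the paper: reduce to the ODE $v_{\psi\psi}-(\tan\psi)v_\psi=f$ by rotational symmetry, write it in divergence form $((\cos\psi)v_\psi)_\psi=(\cos\psi)f$, integrate from each pole using the vanishing of $(\cos\psi)v_\psi$ there, and let the quadratic vanishing of the integral absorb the $\sec\psi$ singularity. The only (cosmetic) difference is that you evaluate $\int\cos\phi\,d\phi$ exactly and use $1-|\sin\psi|\le\cos^2\psi$ to get one global bound, whereas the paper bounds $\cos\rho-\cos(\pi/2)$ via the mean value theorem and treats the two hemispheres separately.
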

\begin{proof}
A proof of this result is given in \cite{DHS}. However the proof 
in \cite{DHS} is hard. We will use ODE technique to give 
a simple proof here. Let $t_0<0$. Since $v$ is independent of $\theta$, 
\begin{equation}
v_{\psi\psi}-(\tan\psi)v_{\psi}=f\quad\forall |\psi|<\pi/2,t<0.
\end{equation}
Since $v((\pi/2)+\delta,t)=v((\pi/2)-\delta,t)$ and 
$v(-(\pi/2)+\delta,t)=v(-(\pi/2)-\delta,t)$ for any 
$0<\delta<\pi/2$ and $t<0$,
\begin{equation}
v_{\psi}(\pi/2,t)=v_{\psi}(-\pi/2,t)=0\quad\forall t<0.
\end{equation} 
By (15), (17), (18) and the mean value theorem,
\begin{align}
((\cos\psi)v_{\psi})_{\psi}
=&\cos\psi(v_{\psi\psi}-(\tan\psi)v_{\psi})=(\cos\psi)f\nonumber\\
\Rightarrow\qquad\qquad\quad v_{\psi}=&\frac{1}{\cos\psi}
\int_{\frac{\pi}{2}}^{\psi}(\cos\rho)f(\rho,t)\,d\rho
\quad\forall -\pi/2<\psi<\pi/2,t<0\\
=&\frac{1}{\cos\psi}\int_{\frac{\pi}{2}}^{\psi}
(\cos\rho-\cos(\pi/2))f(\rho,t)\,d\rho\nonumber\\
\Rightarrow\qquad\qquad\,\, |v_{\psi}|\le&\frac{C_1}{\cos\psi}
\int_{\frac{\pi}{2}}^{\psi}|\rho-(\pi/2)|\,d\rho\nonumber\\
\le&\frac{C_1}{2\cos\psi}((\pi/2)-\psi)^2\nonumber\\
\Rightarrow\quad\,\,\,|(\sec\psi)v_{\psi}|
\le&C_1\frac{((\pi/2)-\psi)^2}{2\cos^2\psi}\le C'
\quad\forall -\pi/4\le\psi<\pi/2,t\le t_0.
\end{align}
By (15), (17) and (20),
\begin{equation}
|v_{\psi\psi}|\le |\sec\psi||v_{\psi}|+|f|\le C_1+C'
\quad\forall -\pi/4\le\psi<\pi/2,t\le t_0.
\end{equation}
By (17) and (18),
\begin{equation}
v_{\psi}=\frac{1}{\cos\psi}\int_{-\frac{\pi}{2}}^{\psi}
(\cos\rho)f(\rho,t)\,d\rho\quad\forall -\pi/2<\psi<\pi/2,t<0.
\end{equation}
By (22) and an argument as before,
\begin{equation}
|v_{\psi\psi}|+|(\sec\psi)v_{\psi}|\le C\quad\forall -\pi/2<\psi\le\pi/4,
t\le t_0.
\end{equation}
By (20), (21) and (23), we get (16) and the corollary follows.
\end{proof}

\begin{cor}
Let $t_0<0$. Then there exists a constant $C_3>0$ such that
\begin{equation}
|(\cos\psi)v_{\psi\psi\psi}(\psi,t)|\le C_3\quad\forall
|\psi|<\frac{\pi}{2}, t\le t_0.
\end{equation}
\end{cor}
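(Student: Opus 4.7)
The natural first step is to differentiate identity (17), $v_{\psi\psi}-\tan\psi\cdot v_\psi=f$, once more in $\psi$. This gives $v_{\psi\psi\psi}=f_\psi+\sec^2\psi\cdot v_\psi+\tan\psi\cdot v_{\psi\psi}$, and multiplying by $\cos\psi$ yields
$$
\cos\psi\cdot v_{\psi\psi\psi}=\cos\psi\cdot f_\psi+\sec\psi\cdot v_\psi+\sin\psi\cdot v_{\psi\psi}.
$$
Corollary 3 immediately controls the last two terms on the right uniformly for $t\le t_0$, so the proof reduces to a uniform bound $|\cos\psi\cdot f_\psi|\le C$ on $S^2\times(-\infty,t_0]$.

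To attack $\cos\psi\cdot f_\psi$, I would differentiate the evolution equation (2) in $\psi$. From $v_t=vf-v_\psi^2+2v^2$ one obtains
$$
v\cdot f_\psi=v_{\psi t}+2 v_\psi v_{\psi\psi}-4 v v_\psi-v_\psi f,
$$
and by Corollary 3 together with the upper bound on $v$ from (13), each of the three terms on the right involving $v_\psi$ is $O(\cos\psi)$. So upon multiplying by $\cos\psi$ and dividing by $v\ge C_0\cos^2\psi$ (Theorem 2), the estimate is further reduced to the refined pole behaviour $|v_{\psi t}(\psi,t)|\le C\cos\psi$ uniformly for $|\psi|<\pi/2$ and $t\le t_0$.

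This refined mixed-derivative estimate is the main obstacle. A convenient way to handle it is to work with the auxiliary function $p=\cos\psi\cdot v_\psi$: a short calculation using (17) gives the identities
$$
p_\psi=\cos\psi\cdot f,\qquad \Delta_{S^2}p=\cos\psi\cdot f_\psi-2\sin\psi\cdot f,
$$
so that a uniform bound for $\cos\psi\cdot f_\psi$ is equivalent to a uniform bound for $\Delta_{S^2}p$. Differentiating (2) in $\psi$ and rewriting in terms of $p$ produces a linear parabolic equation of the form $p_t=v\,\Delta_{S^2}p+G$ with $|G|$ controlled by Corollary 3, Lemma 1 and (13), together with $|p|\le C\cos^2\psi$; a maximum-principle or Bernstein-type argument applied to $\Delta_{S^2}p$ should then produce the required bound. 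As an alternative route, one can mimic the ODE integration of Corollary 3 at the level of $f$, using the boundary conditions $f_\psi(\pm\pi/2,t)=0$ (from smoothness of $f=\Delta_{S^2}v$ on $S^2$) together with $(\cos\psi\cdot f_\psi)_\psi=\cos\psi\cdot\Delta_{S^2}f$ and a uniform $L^\infty$ bound on $\Delta_{S^2}f$ obtained from the evolution equation for $f$ and the bounds already in hand.
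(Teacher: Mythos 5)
Your reduction is correct and in fact runs parallel to the paper's own argument: differentiating (17) and using Corollary 4 reduces the claim to a uniform bound on $(\cos\psi)f_{\psi}$, and differentiating the evolution equation shows that, modulo terms controlled by Corollary 4, Lemma 1 and (13), this amounts to the pole estimate $|v_{\psi t}|\le C\cos\psi$, i.e.\ (since $v_t=Rv$ and $|Rv_{\psi}|\le C\cos\psi$) to $|(\cos\psi)R_{\psi}|\le C$ uniformly for $t\le t_0$. But this is precisely the crux, and your proposal does not prove it. The paper closes exactly this gap with two global inputs you never invoke: Hamilton's Harnack inequality, which gives $R_t\ge 0$ and hence $0<R\le\max_{S^2}R(\cdot,t_0)$ on $(-\infty,t_0]$, and Shi's derivative estimates for ancient solutions, which give $|\nabla_g R|^2\le C/|t|$; combined with the lower bound $v\ge C_0\cos^2\psi$ from Theorem 2 this yields $C_0\cos^2\psi\,R_{\psi}^2\le vR_{\psi}^2=|\nabla_g R|^2\le C$, which is the missing estimate.

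Neither of your two suggested substitutes is justified as written. The Bernstein/maximum-principle route for $\Delta_{S^2}p$ is only gestured at: equation (2) is uniformly parabolic only on compact time intervals (the lower bound in (39) depends on $t_2$ and degenerates at the poles as $t_2\to-\infty$, since $v\to C_0\cos^2\psi$), so a maximum-principle argument producing a bound uniform in $t\le t_0$ would need initial-time control of a third-order quantity, which is exactly what is being sought. The alternative route assumes ``a uniform $L^{\infty}$ bound on $\Delta_{S^2}f=\Delta_{S^2}^2v$ obtained from the bounds already in hand,'' but no such fourth-order bound is among the available estimates (Lemma 1 stops at $\Delta_{S^2}v$ and $|\nabla_{S^2}v|^2/v$, Corollary 4 at second derivatives), and deriving one would require an argument at least as involved as the one you are trying to avoid. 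The proof is therefore incomplete at its decisive step; inserting the Harnack-plus-Shi bound on $(\cos\psi)R_{\psi}$ would repair it and make it essentially identical to the paper's.
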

\begin{proof}
By the proof of Corollary 4, (19) and (22) holds. Differentiating 
(19) with respect to $\psi$,
\begin{equation}
v_{\psi\psi}(\psi,t)=\frac{\sin\psi}{\cos^2\psi}
\int_{\frac{\pi}{2}}^{\psi}(\cos\rho)f(\rho,t)\,d\rho+f(\psi,t)
\quad\forall |\psi|<\frac{\pi}{2},t<0.
\end{equation}
Now by (2) (cf. \cite{DHS}),
\begin{equation*}
R=\frac{v_t}{v}=\Delta_{S^2}v-\frac{|\nabla_{S^2}v|^2}{v}+2v
\quad\forall |\psi|<\frac{\pi}{2},t<0.
\end{equation*}
Hence
\begin{equation}
f=\Delta_{S^2}v=R+\frac{|\nabla_{S^2}v|^2}{v}-2v
\quad\forall |\psi|<\frac{\pi}{2},t<0.
\end{equation}
By (25) and (26),
\begin{equation}
v_{\psi\psi}(\psi,t)=\frac{\sin\psi}{\cos^2\psi}
\int_{\frac{\pi}{2}}^{\psi}(\cos\rho)f(\rho,t)\,d\rho
+R(\psi,t)+\frac{v_{\psi}^2}{v}-2v
\quad\forall |\psi|<\frac{\pi}{2},t<0.
\end{equation}
Differentiating (27) with respect to $\psi$,
\begin{align*}
v_{\psi\psi\psi}=&\frac{1}{\cos\psi}
\int_{\frac{\pi}{2}}^{\psi}(\cos\rho)f(\rho,t)\,d\rho
+2\frac{\sin^2\psi}{\cos^3\psi}
\int_{\frac{\pi}{2}}^{\psi}(\cos\rho)f(\rho,t)\,d\rho
+(\tan\psi)f+R_{\psi}\\
&\qquad+2\frac{v_{\psi}v_{\psi\psi}}{v}
-\frac{v_{\psi}^3}{v^2}-2v_{\psi}
\quad\forall |\psi|<\frac{\pi}{2},t<0.
\end{align*}
Hence
\begin{align}
(\cos\psi)v_{\psi\psi\psi}=&\int_{\frac{\pi}{2}}^{\psi}
(\cos\rho)f(\rho,t)\,d\rho+2\frac{\sin^2\psi}{\cos^2\psi}
\int_{\frac{\pi}{2}}^{\psi}(\cos\rho)f(\rho,t)\,d\rho
+(\sin\psi)f\nonumber\\
&\qquad+R_{\psi}\cos\psi+2\frac{\cos\psi}{\sqrt{v}}
\cdot\frac{v_{\psi}}{\sqrt{v}}
v_{\psi\psi}-\frac{\cos\psi}{\sqrt{v}}\cdot
\left(\frac{v_{\psi}}{\sqrt{v}}\right)^3\nonumber\\
&\qquad -2\frac{v_{\psi}}{\sqrt{v}}\cdot(\sqrt{v}\cos\psi)
\end{align}
holds for any $|\psi|<\frac{\pi}{2}$ and $t<0$.
Now by the Harnack inequality \cite{H4} and the maximum 
principle, $R_t\ge 0$ on $S^2\times (-\infty,0)$ and
\begin{equation}
0<R\le\max_{S^2}R(\cdot,t_0)\quad\mbox{ on } S^2\times 
(-\infty,t_0].
\end{equation}
By (13), (29), and Shi's derivative estimates \cite{H5},
\begin{align}
&(C_0\cos^2\psi )R_{\psi}^2
\le vR_{\psi}^2=|\nabla_gR(z,t)|\le\frac{C}{\sqrt{|t|}}\quad\mbox{ on }
S^2\times (-\infty,t_0]\nonumber\\
\Rightarrow\quad&|(\cos\psi)R_{\psi}|\le\frac{C'}{|t_0|^{\frac{1}{4}}}
\quad\mbox{ on }S^2\times (-\infty,t_0].
\end{align}
By (13), Lemma 1, and Corollary 4 there exists a constant $C>0$ such that
\begin{equation}
\left|\frac{\cos\psi}{\sqrt{v}}\cdot\frac{v_{\psi}}{\sqrt{v}}
v_{\psi\psi}\right|+\left|\frac{\cos\psi}{\sqrt{v}}\cdot
\left(\frac{v_{\psi}}{\sqrt{v}}\right)^3\right|+\left|
\frac{v_{\psi}}{\sqrt{v}}\cdot(\sqrt{v}\cos\psi)\right|
\le C\quad\mbox{ on }S^2\times (-\infty,t_0].
\end{equation}
Now by (15),
\begin{align}
\left|\int_{\frac{\pi}{2}}^{\psi}(\cos\rho)f(\rho,t)\,d\rho\right|
=&\left|\int_{\frac{\pi}{2}}^{\psi}(\cos\rho-\cos(\pi/2))
f(\rho,t)\,d\rho\right|\nonumber\\
\le&C_1\int_{\psi}^{\frac{\pi}{2}}|\rho-(\pi/2)|\,d\rho\nonumber\\
\le&C_1((\pi/2)-\psi)^2\nonumber\\
\le&C'\cos^2\psi\quad\forall -\pi/4\le\psi<\pi/2,t\le t_0.
\end{align}
Hence
\begin{equation}
\frac{\sin^2\psi}{\cos^2\psi}
\left|\int_{\frac{\pi}{2}}^{\psi}(\cos\rho)f(\rho,t)\,d\rho\right|
\le C''\quad\forall -\pi/4\le\psi<\pi/2,t\le t_0.
\end{equation}
By (15), (28), (30), (31), (32) and (33), there exists a constant $C>0$
such that
\begin{equation}
|(\cos\psi)v_{\psi\psi\psi}|\le C\quad\forall -\pi/4\le\psi<\pi/2,
t\le t_0.
\end{equation}
Similarly by using (22) and repeating the above argument,
\begin{equation}
|(\cos\psi)v_{\psi\psi\psi}|\le C\quad\forall -\pi/2<\psi\le\pi/4,
t\le t_0.
\end{equation}
By (34) and (35) we get (24) and the lemma follows.
\end{proof}

As in \cite{DHS} we introduce the Mercator's projection of 
the sphere $S^2$ onto the cylinder with coordinates $(x,\theta)$ 
which is given by
\begin{equation*}
\cosh x=\sec\psi\quad\mbox{ and }\quad\sinh x=\tan\psi
\end{equation*}
and
\begin{equation*}
\frac{d x}{d\psi}=\sec\psi.
\end{equation*}
Then in the cylindrical coordinates the metric $g$ can be written as
$$
g(x,\theta,t)=U(x,\theta,t)ds^2,\quad ds^2=dx^2+d\theta^2
$$
for some function $U$ that satisfies
\begin{equation*}
U(x,\theta,t)=u(\psi,t)\cos^2\psi.
\end{equation*}
Let $w=U^{-1}$. Then 
\begin{equation*}
v(\psi,t)=w(x,t)\cos^2\psi
\end{equation*}
where $\psi$ and $x$ are related by the Mercator's projection.
Let 
$$
Q(x,t)=w_{xx}(x,t)-4w(x,t)
$$ 
and
$$
F(x,t)=Q_x^2(x,t).
$$
Let $H(\psi,t)$ be the function $F$ in Mercator's coordinates on $S^2$.

\begin{cor}
For any $t_0<0$ there exists a constant $C>0$ such that
\begin{equation*}
H(\psi,t)\le C\quad\forall |\psi|<\frac{\pi}{2}, t\le t_0.
\end{equation*} 
\end{cor}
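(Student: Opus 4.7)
The plan is to convert everything into the $\psi$-coordinate and then simply read off the bound from Corollaries 4 and 5. The Mercator relation $dx/d\psi=\sec\psi$ gives $\partial_x=\cos\psi\,\partial_\psi$, so
\[
F(x,t)=Q_x^2=\cos^2\psi\,Q_\psi^2,
\]
i.e.\ $H(\psi,t)=\cos^2\psi\,Q_\psi(\psi,t)^2$. Hence it suffices to prove that $\cos\psi\,Q_\psi$ is uniformly bounded on $S^2\times(-\infty,t_0]$.

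The first step is to express $Q=w_{xx}-4w$ directly in terms of $v$ and its $\psi$-derivatives, using $w=v\sec^2\psi$ and $\partial_x=\cos\psi\,\partial_\psi$. A short computation (differentiate $w=v\sec^2\psi$ twice with respect to $\psi$, multiply by $\cos^2\psi$ for the two applications of $\partial_x$, and subtract $4w$) yields a cancellation of the $v$ and $v\tan^2\psi$ terms, leaving
\[
Q(\psi,t)=v_{\psi\psi}+3v_\psi\tan\psi-2v.
\]
This is the one piece of routine calculation and is where I expect the main (purely bookkeeping) obstacle to lie: making sure the $\sec^2\psi$ singularities from $4w$ exactly cancel the ones coming from $w_{xx}$, so that $Q$ is a smooth combination of $v, v_\psi, v_{\psi\psi}$ with only $\tan\psi$ as an unbounded coefficient.

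The second step is to differentiate once more in $\psi$:
\[
Q_\psi=v_{\psi\psi\psi}+3v_{\psi\psi}\tan\psi+3v_\psi\sec^2\psi-2v_\psi,
\]
and then multiply by $\cos\psi$:
\[
\cos\psi\,Q_\psi=\cos\psi\,v_{\psi\psi\psi}+3\sin\psi\,v_{\psi\psi}+3\sec\psi\,v_\psi-2\cos\psi\,v_\psi.
\]
Now I read off the bounds term by term on $S^2\times(-\infty,t_0]$: the first term is bounded by $C_3$ by Corollary 5; the second term is bounded because $v_{\psi\psi}$ is bounded by Corollary 4 and $|\sin\psi|\le 1$; the third term is bounded directly by Corollary 4; and the last term is bounded because Corollary 4 gives $|v_\psi|\le C_2\cos\psi$, so $|\cos\psi\,v_\psi|\le C_2\cos^2\psi\le C_2$.

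Thus $|\cos\psi\,Q_\psi|$ is bounded by an absolute constant on $S^2\times(-\infty,t_0]$, and squaring yields the desired bound on $H$. No further tool beyond the already-proved Corollaries 4 and 5 and the chain rule through the Mercator projection is needed.
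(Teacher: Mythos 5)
Your proposal is correct and follows essentially the same route as the paper: you derive the identity $\cos\psi\,Q_\psi=(\cos\psi)v_{\psi\psi\psi}+3\bigl((\sin\psi)v_{\psi\psi}+(\sec\psi)v_{\psi}\bigr)-2(\cos\psi)v_{\psi}$, which is exactly the paper's formula (36) for $Q_x$, and then bound each term by Corollaries 4 and 5. The only difference is that you spell out the intermediate cancellation giving $Q=v_{\psi\psi}+3v_{\psi}\tan\psi-2v$, which the paper leaves implicit.
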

\begin{proof}
This result is proved in \cite{DHS} using Proposition 2.5 and 
Corollary 2.12 of \cite{DHS} whose proof is hard. We will
give a simple proof here. By direct computation,
\begin{align}
Q_x=&[[(v\sec^2\psi)_{\psi}\cos\psi]_{\psi}\cos\psi]_{\psi}\cos\psi
-4(v\sec^2\psi)_{\psi}\cos\psi\nonumber\\
=&-2(\cos\psi)v_{\psi}+3((\sec\psi)v_{\psi}+(\sin\psi)v_{\psi\psi})
+(\cos\psi)v_{\psi\psi\psi}.
\end{align}
By (36), Corollary 4, and Corollary 5 the corollary follows.
\end{proof}

\begin{lem}
For any $t_2<t_0<0$ and $\3>0$, there exists $\psi_0\in (0,\pi/2)$
such that
\begin{equation}
H(\psi,t)\le\3\quad\forall \psi_0\le|\psi|<\frac{\pi}{2}, 
t_2\le t\le t_0.
\end{equation} 
\end{lem}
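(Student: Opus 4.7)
The plan is to prove that $Q_x(\psi,t)\to 0$ as $\psi\to\pm\pi/2$, uniformly for $t\in[t_2,t_0]$; since $H=Q_x^2$, the bound (37) follows immediately for any prescribed $\3>0$.

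First I would rewrite (36) using the identity $v_{\psi\psi}=(\tan\psi)v_\psi+f$ from (17). Expanding $(\sin\psi)v_{\psi\psi}=(\sin^2\psi/\cos\psi)v_\psi+(\sin\psi)f$ and regrouping turns (36) into
\begin{equation*}
Q_x=-2(\cos\psi)v_\psi+3(\sec\psi)v_\psi(1+\sin^2\psi)+3(\sin\psi)f+(\cos\psi)v_{\psi\psi\psi}.
\end{equation*}
The first term is $O(\cos^2\psi)$ uniformly by Corollary 4. For the two middle terms I would substitute the integral formula (19), splitting $f(\rho,t)=f(\pi/2,t)+[f(\rho,t)-f(\pi/2,t)]$ and using $(\sin\psi-1)/\cos^2\psi=-1/(1+\sin\psi)$ to get
\begin{equation*}
(\sec\psi)v_\psi=-\frac{f(\pi/2,t)}{1+\sin\psi}+E(\psi,t),\qquad |E(\psi,t)|\le\omega(|\pi/2-\psi|),
\end{equation*}
where $\omega$ is a common modulus of continuity (available because $f$ is smooth on the compact set $S^2\times[t_2,t_0]$, hence uniformly continuous). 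The elementary identity $-(1+\sin^2\psi)/(1+\sin\psi)+\sin\psi=-(1-\sin\psi)/(1+\sin\psi)$ then converts $3(\sec\psi)v_\psi(1+\sin^2\psi)+3(\sin\psi)f$ into
\begin{equation*}
-\frac{3(1-\sin\psi)f(\pi/2,t)}{1+\sin\psi}+3(\sin\psi)[f(\psi,t)-f(\pi/2,t)]+3(1+\sin^2\psi)E(\psi,t),
\end{equation*}
each piece of which tends to $0$ uniformly in $t$ as $\psi\to\pi/2$ thanks to $|f|\le C_1$, the uniform continuity of $f$, and the bound on $E$.

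For the last term $(\cos\psi)v_{\psi\psi\psi}$ I would invoke the explicit identity (28) from the proof of Corollary 5. An entirely parallel cancellation of the $f(\pi/2,t)$ contributions occurs between the two integral terms and $(\sin\psi)f$; the leftover terms in (28) are $R_\psi\cos\psi$ together with three products of the form $\frac{\cos\psi}{\sqrt v}\cdot\frac{v_\psi}{\sqrt v}v_{\psi\psi}$, $\frac{\cos\psi}{\sqrt v}(v_\psi/\sqrt v)^3$, and $(v_\psi/\sqrt v)(\sqrt v\cos\psi)$. Each of the latter three carries an explicit factor $v_\psi$, which is controlled by $|v_\psi|\le C_2\cos\psi$ from Corollary 4 and hence tends to $0$ uniformly, while the remaining factors are uniformly bounded by Lemma 1, (13), and Corollary 4. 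The term $R_\psi\cos\psi$ also tends to $0$ uniformly: $R$ is smooth and, by Theorem 3, rotationally symmetric, so writing $R(\psi,t)=R_0(\sin\psi,t)$ with $R_0$ smooth on $[-1,1]\times[t_2,t_0]$ gives $R_\psi\cos\psi=R_0'(\sin\psi,t)\cos^2\psi=O(\cos^2\psi)$ uniformly on the compact time interval.

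The case $\psi\to-\pi/2$ is handled symmetrically, using (22) in place of (19). The main obstacle is to make the cancellations explicit: individually, none of $(\sec\psi)v_\psi$, $(\sin\psi)v_{\psi\psi}$, and $(\cos\psi)v_{\psi\psi\psi}$ is small near the poles, and only by grouping them through the integral representations and extracting the leading $f(\pi/2,t)$ contributions does one see that they precisely cancel in the full expression for $Q_x$.
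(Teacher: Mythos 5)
Your overall strategy is the right one and is the same as the paper's: show that $Q_x\to 0$ uniformly for $t\in[t_2,t_0]$ as $\psi\to\pm\pi/2$, so that $H=Q_x^2$ drops below any prescribed $\3$ near the poles. Your treatment of $-2(\cos\psi)v_\psi$ and of the group $3\bigl((\sec\psi)v_\psi+(\sin\psi)v_{\psi\psi}\bigr)$ via (17), (19) and the extraction of the leading $f(\pi/2,t)$ contribution is sound and is essentially the paper's computation of $I_1+I_2+I_3$ in (47)--(51), anchored at $f(\pi/2,t)$ instead of $f(\psi,t)$. The gap is in your treatment of $(\cos\psi)v_{\psi\psi\psi}$ through the identity (28). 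You claim the three product terms vanish because each carries a factor $v_\psi=O(\cos\psi)$ while ``the remaining factors are uniformly bounded by Lemma 1, (13), and Corollary 4.'' That is false as stated: with only the lower bound $v\ge C_0\cos^2\psi$ from (13), the remaining factor of the first product is $2(\cos\psi/v)\,v_{\psi\psi}$, which is of order $\sec\psi$, and the full term satisfies only
$$
\left|\frac{2\cos\psi\, v_\psi v_{\psi\psi}}{v}\right|\le\frac{2C_2^2\cos^2\psi}{C_0\cos^2\psi}=\frac{2C_2^2}{C_0},
$$
i.e.\ it is bounded but not small; the same happens for $\cos\psi\, v_\psi^3/v^2$. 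So with the bounds you cite, these terms do not tend to zero and your proof of $Q_x\to0$ breaks at this point.

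The missing ingredient is that on the compact time slab $[t_2,t_0]$ the function $v$ has a uniform \emph{positive} lower bound: since $v_t=Rv\ge0$, one has $v(\cdot,t)\ge\min_{S^2}v(\cdot,t_2)>0$ for all $t\in[t_2,t_0]$ (this is (39) in the paper). The degenerate bound (13) is only needed as $t\to-\infty$, which is irrelevant for this lemma. With $v\ge c(t_2)>0$ all three product terms are $O(\cos^2\psi)$ and your cancellation argument for (28) goes through; your handling of $R_\psi\cos\psi$ is fine. Note, however, that once one has this positive lower bound the paper takes a much shorter route to the third-derivative term: equation (2) is then uniformly parabolic on $S^2\times[2t_2,t_0]$, so the parabolic Schauder estimates give $|v_{\psi\psi\psi}|\le C$ on all of $S^2\times[t_2,t_0]$, whence $|(\cos\psi)v_{\psi\psi\psi}|\le C\cos\psi\to0$ directly, with no need for the identity (28), the Harnack inequality, or Shi's estimates. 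You should either adopt that shortcut or explicitly insert the lower bound $v\ge\min_{S^2}v(\cdot,t_2)$ where you currently cite (13).
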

\begin{proof}
By Corollary 4,
\begin{equation}
|(\cos\psi)v_{\psi}|\le C_2\cos^2\psi\quad\forall |\psi|
<\frac{\pi}{2},t\le t_0.
\end{equation}
Since $v_t=Rv\ge 0$ in $S^2\times (-\infty,0)$,
\begin{equation}
0<\min_{S^2}v(\cdot,2t_2)\le v(z,t)\le\max_{S^2}v(\cdot,t_0)
\quad\mbox{ on }S^2\times [2t_2,t_0].
\end{equation}
By (39) the equation (2) for $v$ is uniformly parabolic on $S^2\times 
[2t_2,t_0]$. By the parabolic Schauder estimates \cite{LSU} 
and the compactness of $S^2\times [t_2,t_0]$, there exists a constant
$C>0$ such that
\begin{equation}
|v_{\psi}|+|v_{\psi\psi}|+|v_{\psi\psi\psi}|+|v_{\psi\psi\psi\psi}|
\le C\quad\forall |\psi|<\frac{\pi}{2},t_2\le t\le t_0
\end{equation}
and
\begin{equation}
|f(\rho,t)-f(\psi,t)|=|\Delta_{S^2}v(\rho,t)-\Delta_{S^2}v(\psi,t)|
\le C|\rho-\psi|^{\alpha}
\end{equation}
with $\alpha=1$ for any $\rho,\psi\in (-\pi/2,\pi/2)$ and 
$t_2\le t\le t_0$. Hence
\begin{equation}
|(\cos\psi)v_{\psi\psi\psi}|\le C\cos\psi\quad\forall 
|\psi|<\frac{\pi}{2},t_2\le t\le t_0.
\end{equation}
Now by Corollary 4,
\begin{align}
|(\sec\psi)v_{\psi}+(\sin\psi)v_{\psi\psi}|
\le&|(\sec\psi)v_{\psi}+v_{\psi\psi}|+|1-\sin\psi||v_{\psi\psi}|
\nonumber\\
\le&|(\sec\psi)v_{\psi}+v_{\psi\psi}|+C_2|1-\sin\psi|
\quad\mbox{ on }S^2\quad\forall t\le t_0.
\end{align}
By (17) and Corollary 4,
\begin{align}
|(\sec\psi)v_{\psi}+v_{\psi\psi}|
=&|(\sec\psi)v_{\psi}+(\tan\psi)v_{\psi}+f|\nonumber\\
\le&|2(\sec\psi)v_{\psi}+f|+|1-\sin\psi||(\sec\psi)v_{\psi}|
\nonumber\\
\le&|2(\sec\psi)v_{\psi}+f|+C_2|1-\sin\psi|
\quad\mbox{ on }S^2\quad\forall t\le t_0.
\end{align}
By (19),
\begin{equation}
|2(\sec\psi)v_{\psi}(\psi,t)+f(\psi,t)|
=\left|\frac{2}{\cos^2\psi}\int_{\frac{\pi}{2}}^{\psi}(\cos\rho)
f(\rho,t)\,d\rho+f(\psi,t)\right|.
\end{equation}
By the mean value theorem for any $\rho\in (-\pi/4,\pi/2)$, there
exists a constant $\phi_{\rho}\in (\rho,\pi/2)$ such that
\begin{equation}
\cos\rho =\cos\rho -\cos(\pi/2)=((\pi/2)-\rho)\sin\phi_{\rho}.
\end{equation}
By (15), (45) and (46),
\begin{align}
|2(\sec\psi)v_{\psi}(\psi,t)+f(\psi,t)|
=&\left|\frac{2}{\cos^2\psi}\int_{\frac{\pi}{2}}^{\psi}
((\pi/2)-\rho)f(\rho,t)\sin\phi_{\rho}\,d\rho+f(\psi,t)\right|
\nonumber\\
\le&I_1+I_2+I_3
\end{align}
where
\begin{align}
I_1=&\frac{2}{\cos^2\psi}\left|\int_{\frac{\pi}{2}}^{\psi}
((\pi/2)-\rho)(\sin\phi_{\rho}-\sin(\pi/2))f(\rho,t)\,d\rho
\right|\nonumber\\
\le&\frac{2}{\cos^2\psi}\int_{\psi}^{\frac{\pi}{2}}
|(\pi/2)-\rho||\sin\phi_{\rho}-\sin(\pi/2)||f(\rho,t)|\,d\rho
\nonumber\\
\le&\frac{2}{\cos^2\psi}\int_{\psi}^{\frac{\pi}{2}}
|(\pi/2)-\rho||\phi_{\rho}-(\pi/2)||f(\rho,t)|\,d\rho\nonumber\\
\le&\frac{2}{\cos^2\psi}\int_{\psi}^{\frac{\pi}{2}}
|(\pi/2)-\rho|^2|f(\rho,t)|\,d\rho\nonumber\\
\le&2C_1\frac{(\frac{\pi}{2}-\psi)^3}{3\cos^2\psi}\nonumber\\
\le&C|(\pi/2)-\psi|\qquad\qquad
\forall\psi\in (-\pi/4,\pi/2),t\le t_0,
\end{align}
\begin{equation}
I_2=\frac{2}{\cos^2\psi}\int_{\psi}^{\frac{\pi}{2}}
((\pi/2)-\rho)|f(\rho,t)-f(\psi,t)|\,d\rho
\quad\forall\psi\in (-\pi/4,\pi/2),t\le t_0,
\end{equation}
and
\begin{equation}
I_3=\left|1-\frac{(\frac{\pi}{2}-\psi)^2}{\cos^2\psi}\right|
|f(\psi,t)|
\le C_1\left|1-\frac{1}{\sin^2\psi'}\right|
\le C\cos^2\psi'
\end{equation}
for some $\psi'\in (\psi,\pi/2)$ where $\psi\in (\pi/4,\pi/2)$
and $t\le t_0$. By (41) and (49), 
\begin{align}
I_2\le&\frac{2}{\cos^2\psi}\int_{\psi}^{\frac{\pi}{2}}
((\pi/2)-\rho)(\rho-\psi)^{\alpha}\,d\rho\nonumber\\
\le&\frac{2((\pi/2)-\psi)^{\alpha}}{\cos^2\psi}
\int_{\psi}^{\frac{\pi}{2}}((\pi/2)-\rho)\,d\rho\nonumber\\
\le&\frac{((\pi/2)-\psi)^{2+\alpha}}{\cos^2\psi}\nonumber\\
\le&C((\pi/2)-\psi)^{\alpha}
\quad\forall\psi\in (-\pi/4,\pi/2),t\le t_0,
\end{align}
with $\alpha=1$.
By (36), (38), (42), (43), (44), (47), (48), (50) and 
(51), there exists a constant $\psi_1\in (\pi/4,\pi/2)$ such that
\begin{equation}
H(\psi,t)<\3\quad\forall\psi_1\le\psi<\pi/2,t_2\le t\le t_0.
\end{equation} 
Similarly there exists a constant $\psi_2\in (\pi/4,\pi/2)$ 
such that
\begin{equation}
H(\psi,t)<\3\quad\forall -\pi/2<\psi\le -\psi_2,t_2\le t\le t_0.
\end{equation} 
Let $\psi_0=\max (\psi_1,\psi_2)$. By (52) and (53), we get (37)
and the lemma follows.
\end{proof}

\begin{lem}
For any $t_2<t_0<0$, there exists a constant $C>0$ such that
\begin{equation}
|H_{\psi}(\psi,t)|\le C\quad\forall |\psi|<\frac{\pi}{2},
t_2\le t\le t_0.
\end{equation}
\end{lem}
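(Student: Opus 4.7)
My plan is to exploit $H = Q_x^2$ and write $H_\psi = 2 Q_x (Q_x)_\psi$; since $v$, and hence $Q$, is independent of $\theta$ by Theorem 3, this reduction is valid. I would differentiate the explicit identity (36) once more in $\psi$ to obtain
\[
(Q_x)_\psi = 2\sin\psi\, v_\psi + \cos\psi\, v_{\psi\psi} + 3\sec\psi\tan\psi\, v_\psi + 3\sec\psi\, v_{\psi\psi} + 2\sin\psi\, v_{\psi\psi\psi} + \cos\psi\, v_{\psi\psi\psi\psi}.
\]
The four terms with coefficients $\sin\psi$ or $\cos\psi$ are uniformly bounded on $S^2\times[t_2,t_0]$ by the parabolic Schauder estimate (40) already used in the proof of Lemma 7. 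The troublesome terms are $3\sec\psi\tan\psi\, v_\psi$ and $3\sec\psi\, v_{\psi\psi}$; by Corollary 4 these are controlled by $3C_2|\tan\psi|$ and $3C_2|\sec\psi|$ respectively, giving at most $|(Q_x)_\psi|\le C(1+\sec\psi)$ on $S^2\times[t_2,t_0]$.

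To absorb the $\sec\psi$ growth, I would show that $|Q_x|\le C\cos\psi$ in a neighbourhood of each pole on $[t_2,t_0]$. This comes from sharpening the proof of Lemma 7 term by term in (36): the first term satisfies $|\cos\psi\, v_\psi|\le C_2\cos^2\psi$ by (38); the last term satisfies $|\cos\psi\, v_{\psi\psi\psi}|\le C\cos\psi$ by (40) (upgrading the qualitative Corollary 5); and the middle combination $\sec\psi\, v_\psi + \sin\psi\, v_{\psi\psi}$ is bounded by $C((\pi/2)-|\psi|)$ near the poles by the estimates (47)--(51) in Lemma 7's proof, now sharpened using (40) for the higher derivatives. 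Since $(\pi/2)-|\psi|\sim\cos\psi$ near each pole, all three contributions to (36) are $O(\cos\psi)$ there.

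Combining the two bounds yields $|H_\psi|\le 2|Q_x||(Q_x)_\psi|\le 2\cdot C\cos\psi\cdot C\sec\psi = C'$ near the poles; on any compact subset of $\{|\psi|<\pi/2\}$ bounded away from the poles, both factors are uniformly bounded since $\sec\psi$ and $\tan\psi$ are bounded and the $v$-derivatives are controlled by (40). The principal obstacle is the second step, namely extracting the quantitative rate $|Q_x|\le C\cos\psi$ near the poles from what is only the qualitative smallness assertion $H\to 0$ in Lemma 7; this amounts to redoing the bookkeeping of the powers of $(\pi/2)-|\psi|$ in the integrals $I_1,I_2,I_3$ appearing in (47)--(51), with (40) in hand in place of Corollary 5.
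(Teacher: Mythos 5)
Your proposal is correct, but it distributes the work differently from the paper. Both arguments start the same way: write $H_\psi=2Q_xQ_{x\psi}$ and differentiate (36) (your formula for $Q_{x\psi}$ agrees with the paper's (55)), and both must neutralize the singular terms $3\sec\psi\tan\psi\,v_\psi+3\sec\psi\,v_{\psi\psi}$. The paper does this at the level of $Q_{x\psi}$ itself: it rewrites this combination via the identity
$\sec\psi\tan\psi\,v_\psi+\sec\psi\,v_{\psi\psi}=((\sec\psi)v_\psi+v_{\psi\psi})\tan\psi+\frac{\cos\psi}{1+\sin\psi}v_{\psi\psi}$
and invokes the cancellation bound $|(\sec\psi)v_\psi+v_{\psi\psi}|\le C(|(\pi/2)-\psi|+\cos^2\psi)$ from the proof of Lemma 7, which kills the $\tan\psi$; this gives $|Q_{x\psi}|\le C$ outright, and the lemma follows by multiplying with the uniform bound on $|Q_x|$ from Corollary 6. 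You instead accept $|Q_{x\psi}|=O(\sec\psi)$ and compensate by sharpening the bound on the other factor to $|Q_x|\le C\cos\psi$ near the poles. The two routes rest on exactly the same underlying cancellation from Lemma 7 (estimates (47)--(51)), applied once to $Q_x$ in your case and once to $Q_{x\psi}$ in the paper's; neither is more general, though the paper's version is slightly cleaner because it yields the reusable intermediate statement (61).

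One remark on the step you flag as the ``principal obstacle'': no new bookkeeping is actually required. The proof of Lemma 7 already records quantitative rates, not just the qualitative smallness (52): estimate (48) gives $I_1\le C|(\pi/2)-\psi|$, estimate (51) gives $I_2\le C((\pi/2)-\psi)^{\alpha}$ with $\alpha=1$, and (50) gives $I_3\le C\cos^2\psi'\le C\cos^2\psi$; together with $|1-\sin\psi|=O(((\pi/2)-\psi)^2)$ in (43)--(44), the middle term of (36) is $O((\pi/2)-\psi)=O(\cos\psi)$ near the north pole, the first term is $O(\cos^2\psi)$ by (38), and the third is $O(\cos\psi)$ by (42). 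So $|Q_x|\le C\cos\psi$ near the poles on $[t_2,t_0]$ is already implicit in what is proved, and your argument closes without further work.
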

\begin{proof}
Differentiating (36) with respect to $\psi$,
\begin{align}
Q_{x\psi}=&2(\sin\psi)v_{\psi}-2(\cos\psi)v_{\psi\psi}
+3(\sec\psi\tan\psi\, v_{\psi}+(\sec\psi )v_{\psi\psi}
+(\cos\psi)v_{\psi\psi})\nonumber\\
&\qquad+2(\sin\psi)v_{\psi\psi\psi}
+(\cos\psi)v_{\psi\psi\psi\psi}.
\end{align}
Now
\begin{equation}
\sec\psi\tan\psi\, v_{\psi}+(\sec\psi )v_{\psi\psi}
=((\sec\psi)v_{\psi}+v_{\psi\psi})\tan\psi
+\frac{\cos\psi}{1+\sin\psi}v_{\psi\psi}.
\end{equation}
By the proof of Lemma 7 and the mean value theorem,
\begin{align}
|(\sec\psi)v_{\psi}+v_{\psi\psi}|\le&C(|1-\sin\psi|
+|(\pi/2)-\psi|+\cos^2\psi)\nonumber\\
\le&C'(|(\pi/2)-\psi|+\cos^2\psi)
\end{align}
holds for any $\pi/4\le\psi<\pi/2$, $t_2\le t\le t_0$.
By (56), (57), Corollary 4 and the mean value theorem,
there exists a constant $C>0$ such that
\begin{equation}
|\sec\psi\tan\psi\, v_{\psi}+(\sec\psi )v_{\psi\psi}|
\le C\quad\forall -\frac{\pi}{4}\le\psi<\frac{\pi}{2},
t_2\le t\le t_0.
\end{equation}
By (40), (55) and (58),
\begin{equation}
|Q_{x\psi}|\le C\quad\forall -\frac{\pi}{4}\le\psi<\frac{\pi}{2},
t_2\le t\le t_0.
\end{equation}
Similarly
\begin{equation}
|Q_{x\psi}|\le C\quad\forall -\frac{\pi}{2}<\psi\le\frac{\pi}{4},
t_2\le t\le t_0.
\end{equation}
By (59) and (60),
\begin{equation}
|Q_{x\psi}(\psi,t)|\le C\quad\forall |\psi|<\frac{\pi}{2},
t_2\le t\le t_0.
\end{equation}
Since $H_{\psi}=2Q_xQ_{x\psi}$, by (61) and Corollary 6
the lemma follows.
\end{proof}

\begin{thm}
Let $C_0>0$. Suupose there exist constants $0<\alpha<1$,
$t_0<0$, $0<a<3$, and $C>0$ such that 
\begin{equation}
(\cos\psi)^{1-\alpha}|v_{\psi\psi\psi}|\le C\quad\mbox{ on }
S^2\times (-\infty,t_0]
\end{equation} 
and (6) and (41) hold for any $\rho,\psi\in (-\pi/2,\pi/2)$ 
and $t\le t_0$. Then $v(\psi,t)=v(\theta,\psi,t)$ satisfies 
(3) for some constant $\mu>0$.
\end{thm}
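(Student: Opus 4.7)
The plan is to prove $Q_x\equiv 0$ on $S^2\times(-\infty,0)$ via a maximum principle for $F=Q_x^2$, then read off the Rosenau form by integrating the resulting ODE in $x$ and matching asymptotics. The engine of the argument is the strict positivity of $w_{xx}$, which is available thanks to the conformal formula for the scalar curvature.

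First I would compute the evolution of $F$ on the cylinder. From $w_t=ww_{xx}-w_x^2$ and $Q=w_{xx}-4w$, direct differentiation (using $w_{xx}=Q+4w$ to cancel the $\pm 8ww_x$ and $\pm 8w_xQ$ terms) yields
\begin{equation*}
Q_{xt}=wQ_{xxx}+w_xQ_{xx}-2w_{xx}Q_x,
\end{equation*}
and hence
\begin{equation*}
F_t=wF_{xx}+w_xF_x-2wQ_{xx}^2-4w_{xx}F.
\end{equation*}
The decisive observation is that the scalar curvature of $g$ in cylindrical coordinates is $R=w_{xx}-w_x^2/w$, so by (29) one has $w_{xx}=R+w_x^2/w>0$. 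Consequently, at any interior spatial maximum of $F(\cdot,t)$ (where $F_x=0$ and $F_{xx}\le 0$) we get $F_t\le 0$, and the standard comparison argument yields that $M(t):=\max_{S^2}H(\cdot,t)$ is non-increasing in $t$.

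Next I would show $M(t)\to 0$ as $t\to-\infty$. With (41) and (62) available on the full half-line $(-\infty,t_0]$, the proof of Lemma 7 extends verbatim to give: for every $\varepsilon>0$ there exists $\psi_0\in(0,\pi/2)$ with $H(\psi,t)\le\varepsilon$ whenever $|\psi|\ge\psi_0$ and $t\le t_0$, the key new input being that $(\cos\psi)|v_{\psi\psi\psi}|\le C(\cos\psi)^\alpha\to 0$ uniformly in $t$. On the complementary region $|\psi|\le\psi_0$, the $C^k$-convergence of $v(\cdot,t)$ to $C_0\cos^2\psi$ from Theorem 2 forces $Q_x$ to converge uniformly to the value it takes on the limit profile; a short substitution into (36) shows that this limit value is identically zero. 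Combined with the monotonicity of $M$ this forces $M\equiv 0$, hence $Q_x\equiv 0$ throughout $S^2\times(-\infty,0)$.

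The remainder is algebraic. Integrating $w_{xxx}=4w_x$ in $x$ yields $w(x,t)=a(t)\cosh(2x)+c(t)\sinh(2x)+b(t)$, and substituting back into $w_t=ww_{xx}-w_x^2$ gives the ODE system $a'=4ab$, $b'=4(a^2-c^2)$, $c'=4bc$; in particular $(c/a)'=0$. A single $t$-independent translation $x\mapsto x+x_0$ (a conformal change of $S^2$ that preserves the normalization $v^\infty=C_0\cos^2\psi$ in the renamed angular coordinate) eliminates $c$, and the reduced system $a'=4ab$, $b'=4a^2$ together with the asymptotics $a+b\to C_0$, $a-b\to-C_0$ pins down the solution as the Rosenau profile with $\mu=C_0$, giving (3). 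The principal obstacle is recognising the positivity $w_{xx}>0$: without the identity $w_{xx}=R+w_x^2/w$ coming from the conformal formula for the scalar curvature, the zero-order term $-4w_{xx}F$ in the $F$-equation has the wrong sign and the maximum-principle approach collapses; once that identity is exploited, everything else reduces to routine bookkeeping.
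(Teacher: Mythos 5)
Your proposal is correct in its essentials, and the key computations check out (the evolution equation $Q_{xt}=wQ_{xxx}+w_xQ_{xx}-2w_{xx}Q_x$, hence $F_t=wF_{xx}+w_xF_x-2wQ_{xx}^2-4w_{xx}F$, and the identity $w_{xx}=R+w_x^2/w>0$ are all verifiable from $w_t=ww_{xx}-w_x^2$ and (29)). But you take a genuinely different route from the paper for the core step $H\equiv 0$. The paper works with the differential inequality $H_t\le v\Delta_{S^2}H$ (quoted from \cite{DHS}), divides by $v$ using $v_t=Rv>0$ to get $\1_t(H/v)\le\Delta_{S^2}H$, and then runs an \emph{integral} monotonicity argument: $\int_{S^2}(H/v)\,dV$ is non-increasing in $t$, the boundary flux near the poles is killed by the gradient bound of Lemma 8, and the pointwise bounds (69)--(71) (which is where the hypotheses (41) and (62) enter, producing the integrable majorant $C((\pi/2)\mp\psi)^{2\alpha-1}$) let dominated convergence drive the integral to zero as $t_2\to-\infty$. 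You instead run a \emph{pointwise} maximum principle for $F=Q_x^2$ on the cylinder, with the good zero-order term $-4w_{xx}F$ supplying the monotonicity of $\sup H$; this is closer in spirit to the original argument of Section 7 of \cite{DHS}, which the paper was explicitly trying to replace, but under the theorem's extra hypotheses your version is also short. What each buys: your route dispenses with Lemma 8 entirely and yields the stronger pointwise statement $\sup_{S^2}H(\cdot,t)\searrow 0$ directly; the paper's route avoids the non-compactness issue on the cylinder, which in your argument must be handled explicitly --- the supremum of $F$ over $x\in\R$ need not be attained, and you need the uniform-in-$t$ version of Lemma 7 (valid on all of $(-\infty,t_0]$ precisely because (41) and (62) are assumed there, so that $(\cos\psi)|v_{\psi\psi\psi}|\le C(\cos\psi)^{\alpha}\to 0$) to conclude that either the supremum is attained in a compact region or is already below $\3$. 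You gloss over this point, and also over the verification $|c(t)|<|a(t)|$ needed to eliminate the $\sinh(2x)$ term by a translation (it follows from positivity of $v$ at both poles, since $w\sim\tfrac12 v(\pm\pi/2,t)e^{\pm 2x}$ as $x\to\pm\infty$); neither is a gap of substance. Finally, note that you carry out the reconstruction of the Rosenau profile from $Q_x\equiv 0$, whereas the paper stops at $H\equiv 0$ and delegates that step to \cite{DHS}; your ODE system $a'=4ab$, $b'=4(a^2-c^2)$, $c'=4bc$ and the identification $\mu=C_0$ via $a^2-b^2\equiv -C_0^2$ are correct.
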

\begin{proof}
As observed in \cite{DHS} it suffices to show that 
\begin{equation}
H(\psi,t)\equiv 0\quad\forall |\psi|<\pi/2,t<0.
\end{equation}
A proof of Theorem 9 without the assumptions (6), (41), and (62) 
was given in section 7 of \cite{DHS}. However the proof in 
\cite{DHS} is hard and requires
the use of the Harnack inequality and Shi's derivative estimates for 
Ricci flow. In this paper we find that under the mild assumptions 
(6), (41), and (62) we can prove the theorem by a simple argument. 
Let $t_0<0$. As observed in \cite{DHS} 
$H(\psi,t)$ satisfies
\begin{equation}
\frac{\1 H}{\1 t}\le v\Delta_{S^2}H\quad\mbox{ on }(S^2\setminus
\{S,N\})\times (-\infty,0).
\end{equation}
Since $v_t=Rv>0$ in $S^2\times (-\infty,0)$, by (64),
\begin{equation}
\frac{\1}{\1 t}\left(\frac{H}{v}\right)=\frac{H_t}{v}-\frac{H}{v^2}v_t
\le\Delta_{S^2}H\quad\mbox{ on }(S^2\setminus
\{S,N\})\times (-\infty,0).
\end{equation}
Let $0<\delta<1$. Then by (65) and Lemma 8,
\begin{align*}
&\int_{S^2\setminus (B_{\delta}(S)\cup B_{\delta}(N))}\frac{H}{v}(\psi,t_1)
\,dV
-\int_{S^2\setminus (B_{\delta}(S)\cup B_{\delta}(N))}\frac{H}{v}(\psi,t_2)
\,dV\\
=&\int_{t_2}^{t_1}\int_{S^2\setminus (B_{\delta}(S)\cup B_{\delta}(N))}
\frac{\1}{\1 t}\left(\frac{H}{v}\right)\,dV\,dt\\
\le&\int_{t_2}^{t_1}\int_{S^2\setminus (B_{\delta}(S)\cup B_{\delta}(N))}
\Delta_{S^2}H\,dV\,dt\\
=&\int_{t_2}^{t_1}\int_{\1 B_{\delta}(S)\cup \1 B_{\delta}(N)}
\frac{\1 H}{\1 n}\,d\sigma\,dt\\
\to& 0\quad\mbox{ as }\delta\to 0\qquad\qquad\qquad\forall t_2<t_1<0.
\end{align*}
Hence
\begin{equation}
\int_{S^2}\frac{H}{v}(\psi,t_1)\,dV
\le\int_{S^2}\frac{H}{v}(\psi,t_2)\,dV
=2\pi\int_{-\frac{\pi}{2}}^{\frac{\pi}{2}}\frac{H}{v}(\psi,t_2)\cos\psi
\,d\psi\quad\forall t_2<t_1<0.
\end{equation}
By (36),
\begin{equation}
H\le 18[(\cos^2\psi)v_{\psi}^2
+((\sec\psi)v_{\psi}+(\sin\psi)v_{\psi\psi})^2
+(\cos^2\psi)v_{\psi\psi\psi}^2].
\end{equation}
By the proof of Lemma 7 and the mean value theorem,
\begin{align}
|(\sec\psi)v_{\psi}+v_{\psi\psi}|\le&C(|1-\sin\psi|
+|(\pi/2)-\psi|^{\alpha}+\cos^2\psi)\nonumber\\
\le&C'(((\pi/2)-\psi)^{\alpha}+\cos^2\psi)
\end{align}
holds for any $\pi/4\le\psi<\psi/2$, $t\le t_0$. By (13), (38), 
(62), (67) and (68),
\begin{align}
&H\le C(\cos^4\psi+(((\pi/2)-\psi)^{\alpha}+\cos^2\psi)^2
+\cos^{2\alpha}\psi)\quad\forall \pi/4\le\psi<\pi/2,
t\le t_0\nonumber\\
\Rightarrow\quad&\frac{H}{v}(\psi,t)\cos\psi
\le C\cos^{2\alpha-1}\psi
\le C'((\pi/2)-\psi)^{2\alpha-1}\quad\forall \pi/4\le\psi
<\pi/2, t\le t_0.
\end{align}
Similarly
\begin{equation}
\frac{H}{v}(\psi,t)\cos\psi\le C|\psi+(\pi/2)|^{2\alpha-1}
\quad\forall -\pi/2<\psi\le -\pi/4,t\le t_0.
\end{equation}
By (13), (38), (43), (62), Corollary 4 and Corollary 5,
\begin{equation}
\frac{H}{v}(\psi,t)\cos\psi\le C\quad\forall |\psi|\le\pi/4,
t\le t_0.
\end{equation}
By (69), (70), (71) and the Lebesgue dominated convergence theorem,
\begin{equation}
\lim_{t_2\to -\infty}\int_{-\frac{\pi}{2}}^{\frac{\pi}{2}}
\frac{H}{v}(\psi,t_2)\cos\psi\,d\psi=0.
\end{equation}
Hence letting $t_2\to -\infty$ in (66), by (72),
\begin{align*}
&\int_{-\frac{\pi}{2}}^{\frac{\pi}{2}}\frac{H}{v}(\psi,t_1)
\cos\psi\,d\psi=0\quad\forall t_1<0\\
\Rightarrow\quad&H\equiv 0\quad\mbox{ in }S^2\times (-\infty,0)
\end{align*}
and the theorem follows.
\end{proof}


\begin{thebibliography}{99}

\bibitem{BKN} I.~Bakas, S.L.~Kong and L.~Ni, {\em Ancient solutions 
of Ricci flow on spheres and generalized Hopf fibrations},
http://arxiv.org/abs/0906.0589.

\bibitem{C} B.~Chow, {\em The Ricci flow on the $2$-sphere}, J. Diff. 
Geom. {\bf 33} (1991), 325--334.

\bibitem{CK} B.~Chow and D.~Knopf, {\em The Ricci flow: An introduction},
Mathematical Surveys and Monographs, Volume 110, Amer. Math. Soc.,
Providence, R.I., U.S.A. 2004.

\bibitem{Ch} S.C.~Chu, {\em Type II ancient solutions to the Ricci flow
on surfaces}, Comm. Anal. Geom. {\bf 15} (2007), no. 1, 195--216.

\bibitem{DK} B.E.J.~Dahlberg and C.~Kenig, {\em Non-negative
solutions of generalized porous medium equations}, Revista 
Matem\'atica Iberoamericana {\bf 2} (1986), 267--305.

\bibitem{DHS} P.~Daskalopoulos, R.S.~Hamilton, N.~Sesum, {\em 
Classification of compact ancient ancient solutions to the Ricci flow
on surfaces}, http://arxiv.org/abs/0902.1158.

\bibitem{H1} R.S.~Hamilton, {\em Three-manifolds with positive Ricci 
curvature}, J. Diff. Geom. {\bf 17} (1982), no. 2, 255--306.

\bibitem{H2} R.S.~Hamilton, {\em Four-manifolds with positive curvature
operator}, J. Diff. Geom. {\bf 24} (1986), no. 2, 153--179.

\bibitem{H3} R.S.~Hamilton, {\em The Ricci flow on surfaces},
Contemp. Math. {\bf 71} (1988), 237--261.

\bibitem{H4} R.S.~Hamilton, {\em The Harnack estimate for the Ricci flow},
J. Diff. Geom. {\bf 37} (1993), no. 1, 225--243.

\bibitem{H5} R.S.~Hamilton, {\em The formation of singularities in the 
Ricci flow}, Surveys in differential geometry, Vol. II (Cambridge, MA, 1993),
7--136, International Press, Cambridge, MA, 1995.

\bibitem{Hs1} S.Y.~Hsu, {\em A simple proof on the non-existence of 
shrinking breathers for the Ricci flow}, Calculus of Variations
and P.D.E. {\bf 27} (2006), no. 1, 59--73.

\bibitem{Hs2} S.Y.~Hsu, {\em Generalized $\mathcal{L}$-geodesic and 
monotonicity of the generalized reduced volume in the Ricci flow},
J. Math. Kyoto Univ. {\bf 49} (2009), no. 3, 503--571. 

\bibitem{Hu} K.M.~Hui, {\em Existence of solutions of the equation 
$u_t=\Delta\log u$}, Nonlinear Analysis, TMA {\bf 37} (1999),
no. 7, 875--914. 

\bibitem{LSU} O.A.~Ladyzenskaya, V.A.~Solonnikov, and
N.N.~Uraltceva, {\em Linear and quasilinear equations of
parabolic type}, Transl. Math. Mono. Vol 23,
Amer. Math. Soc., Providence, R.I., U.S.A. 1968.

\bibitem{MT} J.~Morgan and G.~Tang, {\em Ricci flow and the Poincar\'e
conjecture}, Clay Mathematics Monographs Volume 3, American Mathematical 
Society, Providence, RI, USA, 2007.

\bibitem{P1} G.~Perelman, {\em The entropy formula for the Ricci flow and its 
geometric applications}, http://arxiv.org/abs/math/0211159.

\bibitem{P2} G.~Perelman, {\em Ricci flow with surgery on three-manifolds},
http://arxiv.org/abs\linebreak 
/math/0303109.

\bibitem{R} P.~Rosenau, {\em Fast and superfast diffusion processes}, 
Phys. Rev. Letter {\bf 74} (1995), 1056--1059.

\bibitem{S1} W.X.~Shi, {\em Deforming the metric on complete Riemannian 
manifolds}, J. Differential Geom. {\bf 30} (1989), 223--301.

\bibitem{S2} W.X.~Shi, {\em Ricci deformation of the metric on complete 
non-compact Riemannian manifolds}, J. Differential Geom. {\bf 30} (1989),
303--394.

\bibitem{W1} L.F.~Wu, {\em The Ricci flow on complete $R^2$},
Comm. in Analysis and Geometry {\bf 1} (1993), 439--472.

\bibitem{W2} L.F.~Wu, {\em A new result for the porous
medium equation}, Bull. Amer. Math. Soc. {\bf 28} (1993), 90--94.

\end{thebibliography}
\end{document}